	\newtheorem{thm}{Theorem}[section]
	\newtheorem{lem}[thm]{Lemma}
	\newtheorem{prop}[thm]{Proposition}
\def\ps@pprintTitle{%
  \let\@oddhead\@empty
  \let\@evenhead\@empty
  \def\@oddfoot{\reset@font\hfil}
  \def\@evenfoot{\reset@font\hfil}
}
\begin{document}

\begin{frontmatter}

\title{\bf On Proving Ramanujan's Inequality using a Sharper Bound for the Prime Counting Function $\pi(x)$}

\author[affil]{\textsc{Subham De} \orcidlink{0009-0001-3265-4354}}

\address[affil]{Department of Mathematics, Indian Institute of Technology Delhi, India \footnote{email: subham581994@gmail.com}\footnote{Website: \url{www.sites.google.com/view/subhamde}}}

\begin{abstract}
\noindent This article provides a proof that the Ramanujan's Inequality given by, 
			
$$\pi(x)^2 < \frac{e x}{\log x} \pi\Big(\frac{x}{e}\Big)$$
holds unconditionally for every $x\geq \exp(43.5102147)$. In case for an alternate proof of the result stated above, we shall exploit certain estimates involving the Chebyshev Theta Function, $\vartheta(x)$ in order to derive appropriate bounds for $\pi(x)$, which'll lead us to a much improved condition for the inequality proposed by Ramanujan to satisfy unconditionally.
\end{abstract}

\begin{keyword}
Ramanujan \sep Prime Counting Function \sep Chebyshev Theta Function \sep Mathematica \sep Primes

\MSC[2020] Primary 11A41 \sep 11A25 \sep 11N05 \sep 11N37 \sep Secondary 11Y99
\end{keyword}

\end{frontmatter}

\section{Introduction}
The notion of analyzing the proportion of prime numbers over the real line $\mathbb{R}$ first came into the limelight thanks to the genius work of one of the greatest and most gifted mathematicians of all time named \textit{Srinivasa Ramanujan}, as evident from his letters \cite[pp. xxiii-xxx , 349-353]{12} to another one of the most prominent mathematicians of $20^{th}$ century, \textit{G. H. Hardy} during the months of Jan/Feb of $1913$, which are testaments to several strong assertions about the \textit{Prime Counting Function}, $\pi(x)$ [cf. Definition $(2.3)$ \cite{15}].\par 
In the following years, Hardy himself analyzed some of those results \cite{13} \cite[pp. 234-238]{14}, and even wholeheartedly acknowledged them in many of his publications, one such notable result is the \textit{Prime Number Theorem} [cf. Theorem $(2.4)$ \cite{15}].\par 
\textit{Ramanujan} provided several inequalities regarding the behavior and the asymptotic nature of $\pi(x)$. One of such relation can be found in the notebooks written by Ramanujan himself has the following claim.
\begin{thm}\label{thm2}
	(Ramanujan's Inequality \cite{1})  For $x$ sufficiently large, we shall have,
	\begin{align}\label{1}
		(\pi(x))^{2}<\frac{ex}{\log x}\pi\left(\frac{x}{e}\right)
	\end{align}
\end{thm}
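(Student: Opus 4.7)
My plan, following the direction announced in the abstract, is to bypass direct estimates of $\pi(x)$ and instead exploit sharp explicit control of the Chebyshev theta function $\vartheta(x) = \sum_{p \le x} \log p$. The program has three broad steps: (i) invoke a modern explicit bound of the form $|\vartheta(x)-x|\le \eta(x)\,x$ valid beyond some modest threshold, with $\eta(x)$ decaying at rate $O(1/(\log x)^{N})$ for suitably large $N$; (ii) convert this into explicit two-sided estimates of $\pi(x)$; and (iii) substitute these into Ramanujan's inequality \eqref{1} and reduce it to a one-variable analytic inequality in $u = \log x$, to be verified for $u \ge 43.5102147$.

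The conversion in step (ii) goes through the Abel summation identity
\[
\pi(x) = \frac{\vartheta(x)}{\log x} + \int_{2}^{x} \frac{\vartheta(t)}{t (\log t)^{2}}\, dt.
\]
Writing $\vartheta(t) = t - (t - \vartheta(t))$, the main term unfolds by repeated integration by parts into the standard expansion $\frac{x}{(\log x)^{2}} + \frac{2x}{(\log x)^{3}} + \frac{6x}{(\log x)^{4}} + \cdots$, while the tail is controlled using the hypothesis on $\eta$. This furnishes explicit upper and lower bounds for $\pi(x)$ matching the classical asymptotic $\frac{x}{\log x}\sum_{k\ge 0}\frac{k!}{(\log x)^{k}}$ up to any prescribed order. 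Replacing $x$ by $x/e$ then supplies the required lower bound for $\pi(x/e)$.

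The delicate part is the final substitution into \eqref{1}. A direct power-series check shows that the coefficients of $x^{2}/(\log x)^{j}$ on the two sides agree for $j = 2,3,4,5$; only at $j=6$ does a positive margin appear, of size $x^{2}/(\log x)^{6}$ in favour of the right-hand side. Once the $\pi$-estimates are plugged in, the problem therefore reduces to verifying an explicit scalar inequality $\Phi(u) > 0$ for all $u \ge 43.5102147$. Each summand of $\Phi$ has a known sign and a definite decay rate in $u$, so the verification can be carried out by elementary calculus together with a symbolic-numeric check in \texttt{Mathematica}.

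The main obstacle is precisely this fourfold cancellation: a $\vartheta$-bound with error worse than $x/(\log x)^{5}$ is useless, since the surviving signal lives at order $x^{2}/(\log x)^{6}$. The proof accordingly depends on invoking post-Dusart explicit bounds for $\vartheta(x)$ (of the sharpness produced by Platt--Trudgian or comparable), and on carrying enough terms in the Abel expansion so that each of the four cancelling orders is handled algebraically rather than merely majorised. The specific numerical threshold $\exp(43.5102147)$ should then emerge as the breakeven value of $\log x$ at which the surviving positive margin of order $(\log x)^{-6}$ just overcomes the aggregated error constants of the estimates used.
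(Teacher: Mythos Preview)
Your programme is a coherent route to the theorem \emph{as stated} (``for $x$ sufficiently large''): it is, in outline, precisely the Dudek--Platt strategy, later refined by Axler and others. But it is a genuinely different argument from the paper's, and your closing expectation --- that the threshold $\exp(43.5102147)$ will ``emerge as the breakeven value'' of your direct substitution --- is not correct.

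The paper does \emph{not} push the Abel expansion to fifth order, nor does it invoke any $\vartheta$-bound with error $O(x/(\log x)^{5})$; the only $\vartheta$-estimate it actually uses has error of size $x/(\log x)^{3/2}$, which by your own cancellation analysis would be hopelessly too crude for a frontal attack on \eqref{1}. Instead, from the weak two-sided bound
\[
\frac{x}{\log x - 1 + (\log x)^{-1/2}} \;<\; \pi(x) \;<\; \frac{x}{\log x - 1 - (\log x)^{-1/2}}
\]
the paper extracts two \emph{structural} inequalities: (i) $\pi(\alpha x) < \alpha\,\pi(x)$ for $x > \exp\bigl(4(\log\alpha)^{-2}\bigr)$, and (ii) a Udrescu-type subadditivity $\pi(x+y) < \pi(x) + \pi(y)$ for $\epsilon x \le y \le x$ and $x \ge \exp(9\epsilon^{-2})$. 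Writing $x = a\cdot\tfrac{x}{e} + (e-a)\cdot\tfrac{x}{e}$ with $1<a<e-a$, one application of (ii) followed by two applications of (i) yields the intermediate inequality $\pi(x) < e\,\pi(x/e)$; the value $\exp(43.5102147)$ arises purely from optimising the splitting parameter $a\approx 1.359$, not from any high-order error term. Ramanujan's inequality is then claimed to follow by combining $\pi(x) < e\,\pi(x/e)$ with $\pi(x) > x/\log x$.

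What each approach buys: your direct method is the one that has been steadily pushed in the literature, and with the sharpest currently available explicit $\vartheta$-bounds it yields thresholds around $\exp(3158)$ --- nowhere near $\exp(43.5)$. The paper's detour through $\pi(x) < e\,\pi(x/e)$ completely sidesteps the fourfold cancellation you correctly identified, which is why such crude $\pi$-bounds suffice and why the threshold drops so dramatically. If you wish to recover the paper's numerical claim you must abandon the head-on substitution and instead aim first for the auxiliary inequality $\pi(x) < e\,\pi(x/e)$. (You should, however, scrutinise the paper's final combination step in \eqref{32}: replacing $x/\log x$ by the \emph{larger} quantity $\pi(x)$ inside a negative term does not give an upper bound, so the deduction of $\mathcal{G}(x)<0$ from $\pi(x)<e\,\pi(x/e)$ and $\pi(x)>x/\log x$ as written does not go through.)
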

Worth mentioning that, Ramanujan indeed provided a simple, yet unique solution in support of his claim. Furthermore, it has been well established that, the result is not true for every positive real $x$. Thus, the most intriguing question that the statement of Theorem \eqref{thm2} poses is, \textit{is there any $x_{0}$ such that, Ramanujan's Inequality will be unconditionally true for every $x\geq x_{0}$}?

A brilliant effort put up by \textit{F. S. Wheeler, J. Keiper, and W. Galway} in search for such $x_{0}$ using tools such as \texttt{MATHEMATICA} went in vain, although independently \textit{Galway} successfully computed the largest prime counterexample below $10^{11}$ at $x = 38\mbox{ }358\mbox{ }837\mbox{ }677$. However, \textit{Hassani} \cite[Theorem 1.2]{3} proposed a more inspiring answer to the question in a way that, $\exists$ such $x_{0}=138\mbox{ }766\mbox{ } 146\mbox{ } 692\mbox{ } 471\mbox{ } 228$ with \eqref{1} being satisfied for every $x\geq x_{0}$, but \textit{one has to neccesarily assume the Riemann Hypothesis}. In a recent paper by \textit{A. W. Dudek} and \textit{D. J. Platt} \cite[Theorem 1.2]{2}, it has been established that, ramanujan's Inequality holds true unconditionally for every $x\geq \exp(9658)$. Although this can be considered as an exceptional achievement in this area, efforts of further improvements to this bound are already underway. For instance, \textit{Mossinghoff} and \textit{Trudgian} \cite{5} made significant progress in this endeavour, when they established a better estimate as, $x\geq \exp(9394)$. Later on, \textit{Platt} and \textit{Trudgian} \cite[cf. Th. 2]{18} together established that, further improvement is indeed possible, and that $x\geq \exp(3915)$. Worth mentioning that, \textit{Cully-Hugill and Johnston} \cite[cf. Cor. 1.6]{19} literally took it to the next level by obtaining an effective bound for \eqref{1} to hold unconditionally as, $x\geq \exp(3604)$. Unsurprisingly, \textit{Johnston and Yang} \cite[cf. Th. 1.5]{20} outperformed them in claiming the lower bound for such $x$ satisfying \textit{Ramanujan's Inequality} to be $\exp(3361)$. \par 
 One recent even better result by Axler \cite{6} suggests that, the lower bound for $x$, namely $\exp(3361)$ can in fact be further improved upto $\exp(3158.442)$ using similar techniques as described in \cite{2}, although modifying the error term accordingly adhering to a sharper bound involving $\pi(x)$ and $Li(x)$ derived by \textit{Fiori, Kadiri, and Swidinsky} \cite[cf. Cor. 22]{4}.\par 
 This paper does indeed adopts a new approach in modifying the existing estimates for $x_{0}$ in order for the \textit{Ramanujan's Inequality} (cf. Theorem \eqref{thm2}) to hold without imposing any further assumptions on it for every $x\geq x_{0}$. By utilizing some effective bounds on the \textit{Chebyshev's $\vartheta$-function}, the primary intention is to obtain a suitable bound for $\pi(x)$, and hence eventually come up with a much better estimate for $x_{0}$ by tinkering with the constants while respecting all the stipulated conditions available to us.
		\section{An Improved Criterion for Ramanujan's Inequality}
		
		Suppose, we define, 
		\begin{align}\label{10}
			\mathcal{G}(x):=(\pi(x))^2-\frac{ex}{\log x}\pi\left(\frac{x}{e}\right)
		\end{align}
		A priori using the \textit{Prime Number Theorem} \cite[cf. Th. $(2.4)$]{15}, we can in fact assert that \cite{2},
		\begin{align}\label{11}
			\pi(x)=x\sum\limits_{k=0}^{4}\frac{k!}{\log^{k+1} x}+O\left(\frac{x}{\log^6 x}\right)
		\end{align}
		as $x\rightarrow \infty$. On the other hand, for the \textit{Chebyshev's $\vartheta$-function} having the following definition,
		\begin{align}\label{2}
			\vartheta(x):=\sum\limits_{p\leq x}\log p \mbox{ , }
		\end{align}  
		we can indeed summarize certain inequalities (cf. \cite{7} and \cite{8}) as follows:
		\begin{prop}\label{prop1}
			The following holds true for $\vartheta(x)$:
			\begin{enumerate}
				\item $\vartheta(x)<x$, \hspace{100pt} for $x<10^{8}$,
				\item $|\vartheta(x)-x|<2.05282\sqrt{x}$, \hspace{25pt} for $x<10^{8}$,
				\item $\left| \vartheta(x)-x\right|<0.0239922\frac{x}{\log x}$, \hspace{10pt} for $x\geq 758711$,
				\item $|\vartheta(x)-x|<0.0077629\frac{x}{\log x}$, \hspace{10pt} for $x\geq \exp(22)$,
				\item $|\vartheta(x)-x|<8.072\frac{x}{\log^2 x}$, \hspace{30pt} for $x>1$.
			\end{enumerate}
		\end{prop}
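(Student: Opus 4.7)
The plan is to handle the five bounds by treating the small-$x$ and large-$x$ regimes with different machinery. First I would dispose of parts (1) and (2), which are restricted to $x<10^{8}$. Since $\vartheta$ is a step function that jumps by $\log p$ at each prime $p$, the function $\vartheta(x)-x$ is piecewise affine with slope $-1$; on each gap $[p_{k},p_{k+1})$ its supremum is therefore attained at $x=p_{k}$ and its infimum in the limit $x\to p_{k+1}^{-}$. Hence it suffices to sieve the primes up to $10^{8}$, maintain a running value of $\vartheta(p_{k})$, and check the two inequalities at the endpoints of each gap. This direct enumeration is precisely what has been carried out by Schoenfeld \cite{8} and Dusart \cite{7}.

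For the asymptotic statements (3), (4), and (5), my plan is to invoke the explicit formula
\begin{equation*}
\psi(x)=x-\sum_{\rho}\frac{x^{\rho}}{\rho}-\log(2\pi)-\tfrac{1}{2}\log\bigl(1-x^{-2}\bigr),
\end{equation*}
combined with the elementary estimate $\psi(x)-\vartheta(x)=O(\sqrt{x}\log^{2}x)$ for the contribution of the higher prime powers. The sum over the nontrivial zeros $\rho$ would then be split at some height $T_{0}$: for $|\gamma|\le T_{0}$ one uses a rigorous numerical verification of the Riemann Hypothesis up to $T_{0}$, so that $\rho=\tfrac{1}{2}+i\gamma$ and $|x^{\rho}/\rho|\le\sqrt{x}/|\gamma|$; for $|\gamma|>T_{0}$ one applies an explicit classical (or Korobov--Vinogradov) zero-free region of the form $\sigma\ge 1-\eta(|t|)$, together with a sharp explicit version of the Riemann--von Mangoldt formula $N(T)=\tfrac{T}{2\pi}\log\tfrac{T}{2\pi e}+O(\log T)$ to bound the tail. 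Optimising $T_{0}$ against $x$ and against the shape of $\eta$ then yields effective bounds of the form $|\vartheta(x)-x|\le c\,x/\log^{k}x$ with explicit $c$.

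The hard part is pinning down the very small explicit constants $0.0239922$, $0.0077629$, and $8.072$ together with the stated ranges of validity: this demands (i) numerical verification of RH up to a sufficiently large height $T_{0}$, (ii) tight explicit versions of the zero-counting estimate for $N(T)$, and (iii) delicate parameter optimisation in the tail, all of which are nontrivial but standard pieces of the effective analytic number theorist's toolkit. Since the full bookkeeping has already been executed by Dusart \cite{7} and Schoenfeld \cite{8}, the cleanest proof is simply to quote those references in the form needed here; the role of Proposition \ref{prop1} in this paper is to collate the precise constants that will feed into the subsequent bounds on $\pi(x)$.
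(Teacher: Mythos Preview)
Your proposal is essentially aligned with the paper: the paper offers no proof of Proposition~\ref{prop1} whatsoever, merely introducing it with ``we can indeed summarize certain inequalities (cf.~\cite{7} and \cite{8})'' and then stating the five bounds as a black box. Your final paragraph correctly identifies that the intended ``proof'' here is simply citation, and your preceding sketch of the Rosser--Schoenfeld/Dusart machinery is accurate background that goes beyond what the paper supplies. One small correction: in this paper \cite{7} is Rosser--Schoenfeld (1962), not Dusart, so your attribution ``Dusart~\cite{7}'' should be adjusted.
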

		Applying these inequalities, we can compute a suitable bound for $\vartheta(x)$ as follows:
		\begin{lem}[cf. \cite{9}]\label{lemma1}
			We shall have the following estimate for $\vartheta(x)$:
			\begin{align}\label{12}
				x\left(1-\frac{2}{3(\log x)^{1.5}}\right)<\vartheta(x)<x\left(1+\frac{1}{3(\log x)^{1.5}}\right)\mbox{ , }\hspace{20pt} \mbox{ for }x\geq 6400.
			\end{align}
		\end{lem}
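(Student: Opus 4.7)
The plan is to split the range $x \geq 6400$ into several (overlapping) subranges and, on each, invoke the sharpest available estimate from Proposition \ref{prop1}, comparing it against the target quantity $x/(3(\log x)^{3/2})$.

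First I would note that the lemma reduces to two one-sided statements: $\vartheta(x) - x < \frac{x}{3(\log x)^{1.5}}$ and $x - \vartheta(x) < \frac{2x}{3(\log x)^{1.5}}$. Because the lower-side constant is twice the upper-side one, any symmetric estimate on $|\vartheta(x) - x|$ that controls the upper side automatically controls the lower side, so the delicate numerical work is confined to the upper side for small $x$.

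I would then organize the argument into four ranges. On (i) $6400 \leq x < 10^{8}$, item 1 of Proposition \ref{prop1} delivers the upper bound $\vartheta(x) < x$ at once, while item 2 reduces the lower bound to the elementary inequality $3.07923\,(\log x)^{3/2} \leq \sqrt{x}$. On (ii) $758711 \leq x \leq \exp(193)$, item 3 handles both sides as soon as $3 \cdot 0.0239922\,\sqrt{\log x} \leq 1$. On (iii) $\exp(22) \leq x \leq \exp(1844)$, item 4 does the same with the much smaller constant $0.0077629$. Finally on (iv) $x \geq \exp(587)$, item 5 closes out all very large $x$. Since $758711 < 10^{8}$ and the upper ranges chain together via $\exp(22) < \exp(193)$ and $\exp(587) < \exp(1844)$, the union covers $[6400, \infty)$.

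The main obstacle is the lower endpoint of range (i). At $x = 6400$ the required inequality $3.07923\,(\log x)^{3/2} \leq \sqrt{x}$ holds by an extremely narrow margin (roughly $79.89$ against $80$), which is almost certainly why the threshold in the statement is $6400$ rather than a rounder number. After this base case is verified, monotonicity of the function $\sqrt{x} - 3.07923\,(\log x)^{3/2}$ on $[6400, \infty)$, which itself reduces to the easy bound $\sqrt{x} > 9.24\,\sqrt{\log x}$, finishes the argument, after which the numerical checks in ranges (ii)--(iv) are essentially routine.
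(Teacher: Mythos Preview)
The paper does not actually prove this lemma: it is quoted verbatim (with the tag ``cf.\ \cite{9}'') from Panaitopol, and the text moves directly on to Theorem~\ref{thm1}. So there is no in-paper proof to compare against; your proposal supplies one where the paper merely cites.

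Your argument is sound. Splitting $[6400,\infty)$ into overlapping pieces and, on each, comparing the relevant item of Proposition~\ref{prop1} against $x/(3(\log x)^{3/2})$ is exactly how such a lemma is obtained, and your observation that the symmetric bounds only need to beat the tighter constant $1/3$ is correct. The boundary check at $x=6400$ (roughly $79.9<80$) and the subsequent monotonicity argument are fine. One tiny numerical slip: in range~(iii) the condition $3\cdot 0.0077629\,\sqrt{\log x}\le 1$ gives $\log x\le (3\cdot 0.0077629)^{-2}\approx 1843.8$, so the interval should end just short of $\exp(1844)$ rather than at it; this is irrelevant since range~(iv) already covers everything from $\exp(587)$ onward.
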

		Lemma \eqref{lemma1} does in fact enable us deduce a more effective bound for $\pi(x)$, which'll prove to be immensely beneficial for us later on.
		\begin{thm}\label{thm1}
			We shall have the following estimate for $\pi(x)$ as follows:
			\begin{align}\label{13}
				\frac{x}{\log x -1+\frac{1}{\sqrt{\log x}}}<\pi(x)<\frac{x}{\log x -1-\frac{1}{\sqrt{\log x}}}\mbox{ , }\hspace{20pt} \mbox{ for }x\geq 59.
			\end{align}
		\end{thm}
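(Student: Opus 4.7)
The plan is to deduce the required estimate on $\pi(x)$ directly from the bounds on the Chebyshev theta function furnished by Lemma~\ref{lemma1} via Abel's partial summation. Writing $\pi(x)=\sum_{p\leq x}1$ and summing by parts against the weights $\log p$ contributing to $\vartheta(t)$, one obtains the familiar identity
\begin{equation*}
\pi(x)=\frac{\vartheta(x)}{\log x}+\int_{2}^{x}\frac{\vartheta(t)}{t(\log t)^{2}}\,dt.
\end{equation*}
I would substitute the two-sided estimate of Lemma~\ref{lemma1} into both the boundary term $\vartheta(x)/\log x$ and the integrand, splitting the range of integration at $t=6400$ and absorbing the contribution from $[2,6400]$ (which is a fixed numerical constant computable from tabulated primes) into the final error.

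For the upper half of \eqref{13}, the inequality $\vartheta(t)<t\bigl(1+\tfrac{1}{3(\log t)^{3/2}}\bigr)$ produces a dominant term $x/\log x$, a main correction $\int_{6400}^{x}dt/(\log t)^{2}$, and a residual $\tfrac{1}{3}\int_{6400}^{x}dt/(\log t)^{7/2}$. Repeated integration by parts on $\int dt/(\log t)^{2}$ gives the standard expansion $\tfrac{x}{(\log x)^{2}}+\tfrac{2x}{(\log x)^{3}}+O\bigl(x/(\log x)^{4}\bigr)$, while the residual integral contributes only $O\bigl(x/(\log x)^{7/2}\bigr)$, which is precisely the scale of the $1/\sqrt{\log x}$ slack appearing in the target. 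Taking reciprocals, the upper bound in \eqref{13} becomes equivalent to
\begin{equation*}
\frac{x}{\pi(x)}>\log x-1-\frac{1}{\sqrt{\log x}},
\end{equation*}
and expanding the right-hand side of this as a power series in $1/\log x$ reduces the task to verifying an explicit polynomial inequality in the variable $1/\sqrt{\log x}$, where the coefficients can be read off from the asymptotic expansion derived above. The lower bound in \eqref{13} is proved symmetrically starting from $\vartheta(t)>t\bigl(1-\tfrac{2}{3(\log t)^{3/2}}\bigr)$.

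The chief technical obstacle, as I anticipate it, is not the analytic skeleton above but rather the sharpness of the constants: one must ensure that the Lemma~\ref{lemma1} error coefficients $\tfrac{1}{3}$ and $\tfrac{2}{3}$ propagate through partial summation and repeated integration by parts into error terms that are \emph{strictly} dominated by the $\pm \tfrac{1}{\sqrt{\log x}}$ slack of \eqref{13} for every $x$ in the asserted range, not merely asymptotically. Finally, the interval $59\leq x<6400$ lies outside the regime covered by Lemma~\ref{lemma1}, so this finite range will have to be closed by direct numerical verification against tabulated values of $\pi(x)$; this is routine but unavoidable and, combined with the analytic part for $x\geq 6400$, completes the proof.
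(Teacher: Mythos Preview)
Your plan is essentially the paper's own argument: the same Abel identity, the same substitution of Lemma~\ref{lemma1}, integration by parts on $\int dt/(\log t)^{2}$, and numerical verification for small $x$. The lower bound goes through exactly as you sketch, with the analytic part valid from $x\geq 6400$ and a finite check below.

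The one place where your outline will not close as written is the upper bound for moderate $x$. Feeding the Lemma~\ref{lemma1} upper estimate $\vartheta(t)<t\bigl(1+\tfrac{1}{3}(\log t)^{-3/2}\bigr)$ into the identity and integrating by parts once, the paper is left needing $\tfrac{7}{3}\int_{2}^{x}dt/(\log t)^{3}<\tfrac{2x}{3(\log x)^{5/2}}$, and this inequality only holds for $x\geq\exp(18.25)$, not down to $6400$: the accumulated contribution of the integral from small $t$ is too large relative to the available $(\log x)^{-1/2}$ slack in that range. The paper therefore inserts a second analytic tier for $\exp(11)\leq x<\exp(18.25)$ using the cruder bound $\vartheta(t)<t$ from Proposition~\ref{prop1}(1) (valid for $t<10^{8}$), which drops the $\tfrac{1}{3}(\log t)^{-3/2}$ term entirely and makes the constants work; numerical verification is then pushed up to $x<\exp(11)\approx 59874$, not merely $x<6400$. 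Your anticipation that the ``sharpness of the constants'' is the crux is exactly right, but you should expect to need this extra ingredient (or an equivalent device) rather than a single uniform argument from $6400$ upward.
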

		We briefly discuss the proof of the Theorem above following the steps as described in \cite{9} for the convenience of our readers.
		\begin{proof}
			Applying a well-known inequlity involving $\vartheta(x)$ and $\pi(x)$,
			\begin{align}
				\pi(x)=\frac{\vartheta(x)}{\log x}+\int\limits_{2}^{x}\frac{\vartheta(t)}{t\log^2 t}dt
			\end{align}
			and, with the help of \eqref{12} in Lemma \eqref{lemma1}, we get,
			\begin{align*}
				\pi(x)<\frac{x}{\log x}+\frac{x}{3(\log x)^{2.5}}+\int\limits_{2}^{x}\frac{dt}{\log^2 t}+\frac{1}{3}\int\limits_{2}^{x}\frac{dt}{(\log x)^{3.5}}
			\end{align*}
			\begin{align*}
				\hspace{20pt}=\frac{x}{\log x}\left(1+\frac{1}{3(\log x)^{1.5}}+\frac{1}{\log x}\right)-\frac{2}{\log^2 2}+2\int\limits_{2}^{x}\frac{dt}{\log^3 t}+\frac{1}{3}\int\limits_{2}^{x}\frac{dt}{(\log t)^{3.5}}
			\end{align*}
			\begin{align}\label{15}
				<\frac{x}{\log x}\left(1+\frac{1}{3(\log x)^{1.5}}+\frac{1}{\log x}\right)+\frac{7}{3}\int\limits_{2}^{x}\frac{dt}{\log^3 t}
			\end{align}
			Moreover, defining the function,
			\begin{align}\label{14}
				h_{1}(x):=\frac{2}{3}.\frac{x}{(\log x)^{2.5}}-\frac{7}{3}\int\limits_{2}^{x}\frac{dt}{\log^3 t}\mbox{ , }\hspace{20pt} \mbox{ for }x\geq \exp(18.25)
			\end{align}
			We can observe that, $h_{1}'(x)>0$, implying that, $h_{1}$ is increasing. Now, for every convex function $u\mbox{ }: \mbox{ }[a,b]\rightarrow \mathbb{R}$, where, $a<b \mbox{ , }a,b\in \mathbb{R}_{>0}$, we have, 
			\begin{align}\label{16}
				\int\limits_{a}^{b}u(x)dx\leq \frac{b-a}{n}\left(u(a)+u(b)+\sum\limits_{k=1}^{n-1}u\left(a+k\frac{b-a}{n}\right)\right).
			\end{align}
			Thus, choosing $u(x):=\frac{1}{\log^3 x}$ and $n=10^5$ and using \eqref{16} on each of the intervals $[2,e]$, $[e,e^2]$, ......, $[e^{17},e^{18}]$ and $[e^{18},e^{18.25}]$ yields,
			\begin{align*}
				\int\limits_{2}^{\exp(18.25)}\frac{dt}{\log^3 t}<16870.
			\end{align*}
			Furthermore, one can also verify using \texttt{MATHEMATICA} that,
			\begin{align*}
				h_{1}(\exp(18.25))>\frac{1}{3}(118507-118090)>0.
			\end{align*}
			Therefore, for every $x\geq \exp(18.25)$, we must have from \eqref{15},
			\begin{align}\label{17}
				\pi(x)<\frac{x}{\log x}\left(1+\frac{1}{3(\log x)^{1.5}}+\frac{1}{\log x}\right)<\frac{x}{\log x -1-\frac{1}{\sqrt{\log x}}}
			\end{align}
			Again, for $x\leq \exp(18.25)<10^8$, we apply $(1)$ in Proposition \eqref{prop1} to derive,
			\begin{align*}
				\pi(x)=\frac{\vartheta(x)}{\log x}+\int\limits_{2}^{x}\frac{\vartheta(x)}{t\log^2 t}dt<\frac{x}{\log x}+\int\limits_{2}^{x}\frac{dt}{\log^2 t}
			\end{align*}
			\begin{align*}
				\hspace{150pt}=\frac{x}{\log x}\left(1+\frac{1}{\log x}\right)-\frac{2}{\log^2 2}+2\int\limits_{2}^{x}\frac{dt}{\log^3 t}.
			\end{align*}
			Furthermore, for $4000\leq x< 10^8$, taking the function,
			\begin{align}
				h_{2}(x):=\frac{x}{(\log x)^{2.5}}-2\int\limits_{2}^{x}\frac{dt}{\log^3 t}+\frac{2}{\log^2 2}.
			\end{align}
			We can indeed verify that, $h_{2}'(x)>0$, implying $h_{2}$ is an increasing function. Similarly, with the help of \texttt{MATHEMATICA}, we can compute the sign of $h_{2}$ as follows,
			\begin{align*}
				h_{2}(\exp(11))>149-2\int\limits_{2}^{\exp(11)}\frac{dt}{\log^3 t}>149-140>0.
			\end{align*}
			In summary, thus for $\exp(11)\leq x<10^8$, 
			\begin{align}\label{18}
				\pi(x)<\frac{x}{\log x}\left(1+\frac{1}{\log x}+\frac{1}{(\log x)^{1.5}}\right)<\frac{x}{\log x -1-\frac{1}{\sqrt{\log x}}}.
			\end{align}
			In addition to the above, it is important to note that, for $x\geq 6$, the denominator, $\log x -1-\frac{1}{\sqrt{\log x}}>0$. Which means that, for $6\leq x \leq \exp(11)$, we need to establish,
			\begin{align}\label{19}
				H(x):=\frac{x}{\pi(x)}+1+(\log x)^{-0.5}-\log x>0.
			\end{align}
			Assuming $p_{n}$ to be the $n^{th}$ prime, it can be observed that, $H$ is in fact increasing in $\left[p_{n},p_{n+1}\right)$, thus it only needs to be proven that, $H(p_{n})>0$. \par 
			For $p_{n}<\exp(11)$, we have the inequality $\frac{1}{\sqrt{\log p_{n}}}>0.3$, which reduces our computation to verifying,
			\begin{align*}
				\frac{p_{n}}{n}-\log p_{n}>-1.3
			\end{align*}
			for every $7\leq p_{n}\leq \exp(11)$, which can be achieved using \texttt{MATHEMATICA}.\par 
			In order to establish the lower bound of $\pi(x)$ as claimed in \eqref{13}, we shall be needing $(1)$ in Proposition \eqref{prop1} and \eqref{12} in Lemma \eqref{lemma1} under the condition that, $x\geq 6400$. Hence,
			\begin{align}\label{20}
				\pi(x)-\pi(6400)=\frac{\vartheta(x)}{\log x}-\frac{\vartheta(6400)}{\log (6400)}+\int\limits_{6400}^{x}\frac{\vartheta(t)}{t\log^2 t}dt.
			\end{align}
			Rigorous computations does yield, $\pi(6400)=834$, and, $\frac{\vartheta(6400)}{\log (6400)}<\frac{6400}{\log (6400)}<731$. Thus, \eqref{20} further reduces to,
			\begin{align*}
				\pi(x)>103+\frac{\vartheta(x)}{x}+\int\limits_{6400}^{x}\frac{\vartheta(t)}{t\log^2 t}dt.
			\end{align*}
			Using the lower bound of $\vartheta(x)$ as in \eqref{12} of Lemma \eqref{lemma1} gives,
			\begin{align*}
				\pi(x)>103+\frac{x}{\log x}-\frac{2x}{3\log^{2.5} x}+\frac{x}{\log^2 x}-\frac{6400}{\log^2 6400}+2\int\limits_{6400}^{x}\frac{dt}{\log^3 t}-\frac{2}{3}\int\limits_{6400}^{x}\frac{dt}{\log^{3.5} t}
			\end{align*}
			\begin{align*}
				\hspace{100pt}>\frac{x}{\log x}\left(1+\frac{1}{\log x}-\frac{2}{3\log^{1.5} x}\right)>\frac{x}{\log x -1+\frac{1}{\sqrt{\log x}}}
			\end{align*}
			Setting $v=(\log x)^{-0.5}$, we can assert that, the above inequality holds true for, $2v^3 -5v^2 +3v-1<0$, implying, $v(1-v)(3-2v)\leq \frac{(3-v)}{4}<1$. Hence, it can be confirmed that, the statement \eqref{13} holds true for $x\geq 6400$.\par 
			Furthermore, for $x<6400$, we intend on showing that,
			\begin{align}
				\beta(x):=-\frac{x}{\pi(x)}+\log x -1 +\frac{1}{\sqrt{\log x}}>0.
			\end{align}
			Assuming similarly that, $p_{n}$ denotes the $n^{th}$ prime, one can observe that, the function $\beta(x)$ is indeed decreasing on $\left[p_{n},p_{n+1}\right)$. Hence, it only suffices to check for the values at $p_{n}-1$. Now, $p_{n}\leq 6400$ implies, $(\log (p_{n}-1))^{-0.5}>0.337$, and thus, it only is needed to be checked that,
			\begin{align}\label{21}
				\frac{\log (p_{n}-1)}{p_{n}-1}-\frac{p_{n}-1}{n-1}>0.663
			\end{align}
			Utilizing proper coding in \texttt{MATHEMATICA} gives us, $n\geq 36$ in order for \eqref{21} to satisfy. Therefore, we can further verify that, \eqref{13} holds for $x\geq 59$, and the proof is complete.
		\end{proof}
		Significantly, Karanikolov \cite{10} cited one of the applications of \eqref{13} which says that for  $\alpha\geq e^{1/4}$ and, $x\geq 364$, we must have,
		\begin{align}\label{22}
			\pi(\alpha x)<\alpha \pi(x).
		\end{align}
		Although, a more effective version of \eqref{22} states (cf. Theorem 2 \cite{9}) the following.
		\begin{prop}\label{prop2}
			\eqref{22} holds true for every $\alpha>1$ and, $x>\exp\left(4(\log \alpha)^{-2}\right)$,
		\end{prop}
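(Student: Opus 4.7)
The plan is to deduce Proposition \ref{prop2} as a more or less direct consequence of the double-sided estimate for $\pi(x)$ from Theorem \ref{thm1}. Concretely, one would apply the \emph{upper} bound to $\pi(\alpha x)$ and the \emph{lower} bound to $\pi(x)$, which is legitimate as soon as both arguments exceed $59$. Since $\alpha>1$, the requirement $\alpha x\geq 59$ is implied by $x\geq 59$, so I would first work under the blanket assumption $x\geq 59$ (checking along the way that the denominators $\log x-1\pm\frac{1}{\sqrt{\log x}}$ are positive there) and then verify that the stated threshold $x>\exp(4(\log\alpha)^{-2})$ together with $\alpha>1$ controls the remaining ranges.

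Under these assumptions, it suffices by Theorem \ref{thm1} to establish
\begin{align*}
\frac{\alpha x}{\log(\alpha x)-1-\frac{1}{\sqrt{\log(\alpha x)}}}\;\le\;\alpha\cdot\frac{x}{\log x-1+\frac{1}{\sqrt{\log x}}}.
\end{align*}
Cancelling $\alpha x$ and cross-multiplying (the denominators being positive) reduces this to
\begin{align*}
\log x-1+\frac{1}{\sqrt{\log x}}\;\le\;\log(\alpha x)-1-\frac{1}{\sqrt{\log(\alpha x)}},
\end{align*}
i.e.\ $\log\alpha\,\ge\,\frac{1}{\sqrt{\log x}}+\frac{1}{\sqrt{\log(\alpha x)}}$. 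Since $\log(\alpha x)>\log x$, the second term on the right is strictly smaller than the first, so it is enough to prove $\log\alpha>\frac{2}{\sqrt{\log x}}$; squaring yields $(\log\alpha)^2>\frac{4}{\log x}$, which is precisely the hypothesis $x>\exp(4(\log\alpha)^{-2})$. In this way the threshold in the statement emerges naturally as the calibration forced by the $\frac{1}{\sqrt{\log x}}$ correction terms in Theorem \ref{thm1}, and the whole argument collapses to one short chain of inequalities.

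The main technical obstacle I foresee is the edge regime in which $\exp(4(\log\alpha)^{-2})<59$, which happens precisely when $\alpha$ is sufficiently large (so that $(\log\alpha)^{-2}$ is small). In that range the lower bound of Theorem \ref{thm1} is not directly available at $x$, and one must either sharpen the base estimate using item $(1)$ of Proposition \ref{prop1} to push the $\pi(x)$-bounds down to smaller $x$, or clean up the residual finitely many $(\alpha,x)$-pairs by a direct computer-assisted verification in \texttt{MATHEMATICA} (much in the spirit of the $p_n$-by-$p_n$ checks already carried out in the proof of Theorem \ref{thm1}). Apart from this bookkeeping at the boundary, the proof is essentially algebraic once Theorem \ref{thm1} is in hand.
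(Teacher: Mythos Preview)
Your approach is correct and essentially identical to the paper's: both apply the upper bound of Theorem~\ref{thm1} to $\pi(\alpha x)$ and the lower bound to $\pi(x)$, reduce the claim to $\log\alpha>(\log x)^{-1/2}+(\log\alpha x)^{-1/2}$, and then observe this follows from $x>\exp\bigl(4(\log\alpha)^{-2}\bigr)$. The paper's proof is terser and in fact does not address the edge regime $\exp\bigl(4(\log\alpha)^{-2}\bigr)<59$ that you correctly flag; it simply works under $x\ge 59$ without comment.
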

		\begin{proof}
			We utilize \eqref{13} in theorem \eqref{thm1} for $\alpha x\geq 6$. Thus,
			\begin{align*}
				\frac{\alpha x}{\log \alpha x -1+\frac{1}{\sqrt{\log \alpha x}}}<\pi(\alpha x)<\frac{\alpha x}{\log \alpha x -1-\frac{1}{\sqrt{\log \alpha x}}}
			\end{align*}
			and,
			\begin{align*}
				\frac{\alpha x}{\log x -1+\frac{1}{\sqrt{\log x}}}<\alpha \pi(x)<\frac{\alpha x}{\log x -1-\frac{1}{\sqrt{\log x}}}
			\end{align*}
			for every $x\geq 59$. Now, assuming $x\geq \exp\left(4(\log \alpha)^{-2}\right)$, we can deduce that,
			\begin{align*}
				\log \alpha >(\log \alpha x)^{-0.5}+(\log x)^{-0.5}
			\end{align*}
			which is all that we're required to show. This completes the proof.
		\end{proof}
		As for another application of \eqref{13}, we must mention the work of Udrescu \cite{11}, where it was claimed that, if $0<\epsilon\leq 1$, then,
		\begin{align}\label{23}
			\pi(x+y)<\pi(x)+\pi(y) \mbox{ , }\hspace{20pt}\forall \mbox{ }\epsilon x\leq y\leq x.
		\end{align}
		Again, further progress have in fact been made in order to improve the result \eqref{23}. One such notable work in this regard has been done by Panaitopol \cite{9}.
		\begin{lem}
			\eqref{23} is satisfied under additional condition, $x\geq \exp(9\epsilon^{-2})$, where, $\epsilon\in \left(0,1\right]$.
		\end{lem}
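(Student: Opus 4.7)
The plan is to apply Proposition \eqref{prop2} twice---once to $x$ and once to $y$ with complementary ratios---and then combine the resulting estimates via the identity $1/\alpha + 1/\beta = 1$, where $\alpha := (x+y)/x$ and $\beta := (x+y)/y$. Observe immediately that
\begin{align*}
\frac{1}{\alpha} + \frac{1}{\beta} = \frac{x}{x+y} + \frac{y}{x+y} = 1,
\end{align*}
and that the hypothesis $\epsilon x \leq y \leq x$ places $\alpha \in [1+\epsilon,\, 2]$ and $\beta \in [2,\, 1+\epsilon^{-1}]$.

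First I would apply Proposition \eqref{prop2} with ratio $\alpha$ to obtain $\pi(x+y) = \pi(\alpha x) < \alpha\,\pi(x)$, which is valid provided $x > \exp(4(\log\alpha)^{-2})$. Since $\log\alpha \geq \log(1+\epsilon)$, the task reduces to the elementary inequality $\log(1+\epsilon) \geq 2\epsilon/3$ on $(0,1]$, which then gives $4(\log\alpha)^{-2} \leq 9\epsilon^{-2}$ and hence makes the hypothesis $x \geq \exp(9\epsilon^{-2})$ sufficient. I would justify this estimate by noting that $g(\epsilon) := \log(1+\epsilon) - 2\epsilon/3$ is concave (since $g''(\epsilon) = -(1+\epsilon)^{-2}$) with $g(0) = 0$ and $g(1) = \log 2 - 2/3 > 0$, so concavity forces $g \geq 0$ on $[0,1]$.

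Next I would apply Proposition \eqref{prop2} with ratio $\beta$ to the variable $y$, yielding $\pi(x+y) = \pi(\beta y) < \beta\,\pi(y)$ whenever $y > \exp(4(\log\beta)^{-2})$. Since $\beta \geq 2$, the required threshold is at most $\exp(4(\log 2)^{-2}) < \exp(9)$. The forced lower bound $y \geq \epsilon\,\exp(9\epsilon^{-2})$ exceeds $\exp(9)$ on $(0,1]$: a derivative check shows that $\epsilon \mapsto \log\epsilon + 9\epsilon^{-2}$ has derivative $(\epsilon^2 - 18)/\epsilon^3 < 0$ on $(0,1]$, so the function is decreasing there with value $9$ at $\epsilon = 1$.

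Combining the two bounds via the identity above yields
\begin{align*}
\pi(x+y) = \frac{\pi(x+y)}{\alpha} + \frac{\pi(x+y)}{\beta} < \pi(x) + \pi(y),
\end{align*}
which is the desired conclusion. The main obstacle is extracting the sharp constant $9$ in the exponent (rather than the $16$ that the weaker bound $\log(1+\epsilon) \geq \epsilon/2$ would produce), and this is precisely where the refined estimate $\log(1+\epsilon) \geq 2\epsilon/3$ together with the pairing identity $1/\alpha + 1/\beta = 1$ become essential; the remaining steps are routine applications of results already established in the paper.
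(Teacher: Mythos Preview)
The paper does not actually supply its own proof of this lemma; it is quoted from Panaitopol \cite{9} and immediately used. Consequently there is no in-paper argument to compare your attempt against.

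That said, your proof is correct. The reduction to two applications of Proposition~\eqref{prop2} via the identity $1/\alpha + 1/\beta = 1$ (with $\alpha = (x+y)/x$, $\beta = (x+y)/y$) is clean, and the concavity argument securing $\log(1+\epsilon) \geq 2\epsilon/3$ on $(0,1]$ is exactly what is needed to hit the constant $9$ rather than a weaker one. One small caveat: the proof of Proposition~\eqref{prop2} in the paper implicitly requires the base variable to be at least $59$ (so that Theorem~\eqref{thm1} applies), and for large $\beta$ the stated threshold $\exp\!\bigl(4(\log\beta)^{-2}\bigr)$ can fall below $59$; but since you have already shown $y \geq \exp(9) \gg 59$, this is automatically handled. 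Panaitopol's original argument works directly from the double inequality of Theorem~\eqref{thm1} rather than passing through Proposition~\eqref{prop2}, so your route is a mild repackaging of the same underlying estimate; what it buys is brevity, since the analytic work is delegated to a result the paper has already established.
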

		We shall be using all the above derivations in order to obtain a much improved bound for $x_{0}$ such that, $\mathcal{G}(x)<0$ unconditionally for every $x\geq x_{0}$.\par 
		
		Choose some $a>1$ such that, $e-a>a>1$ as well. Hence,
		\begin{align}\label{24}
			\pi(x)=\pi\left(e.\frac{x}{e}\right)=\pi\left(a.\frac{x}{e}+(e-a).\frac{x}{e}\right)
		\end{align}
		Using \eqref{23} by taking, $\epsilon=\frac{a}{e-a}<1$ as per our construction yields,
		\begin{align}\label{25}
			\pi(x)<\pi\left(a.\frac{x}{e}\right)+\pi\left((e-a).\frac{x}{e}\right)
		\end{align}
		for every $x\geq \frac{e}{e-a}.\exp\left(9.\left(\frac{a}{e-a}\right)^{-2}\right)$.
		Furthermore, by our selection of $a$, we can in fact utilize Proposition \eqref{prop2} again in order to derive the following estimates,
		\begin{align}\label{26}
			\pi\left(a.\frac{x}{e}\right)<a.\pi\left(\frac{x}{e}\right) \mbox{ , } \forall \mbox{ }x>\exp\left(4(\log a)^{-2}+1\right)
		\end{align}
		and,
		\begin{align}\label{27}
			\pi\left((e-a).\frac{x}{e}\right)<(e-a).\pi\left(\frac{x}{e}\right)\mbox{ , } \forall \mbox{ }x>\exp\left(4(\log (e-a))^{-2}+1\right)
		\end{align}
		Therefore, combining \eqref{24}, \eqref{25}, \eqref{26} and \eqref{27}, we obtain,
		\begin{align}\label{28}
			\pi(x)<a.\pi\left(\frac{x}{e}\right)+(e-a).\pi\left(\frac{x}{e}\right)=e.\pi\left(\frac{x}{e}\right)
		\end{align}
		for every such,
		\begin{align}\label{29}
			x> \max\left\{\frac{e}{e-a}.\exp\left(9.\left(\frac{a}{e-a}\right)^{-2}\right),\exp\left(4(\log a)^{-2}+1\right),\exp\left(4(\log (e-a))^{-2}+1\right)\right\}
		\end{align}
		For our convenience, we consider, $a=1.359>1$. 
		
		Thus, we can verify, 
		$$e-a=1.359281828>a>1 \mbox{ and, } \epsilon=0.999792663<1,$$ as desired. Subsequently, we conclude that, \eqref{28} is satisfied for every,
		\begin{align}
			x> \max\left\{1.999792664\exp(9.003733214),\exp(43.5102146),\exp(43.45280029)\right\}
		\end{align}
		In summary, we have, 
		\begin{align}\label{30}
			\pi(x)<e.\pi\left(\frac{x}{e}\right)\mbox{ , }\forall \mbox{ }x\ge \exp(43.5102147).
		\end{align}
		On the other hand, \eqref{11} gives us,
		\begin{align}\label{31}
			\pi(x)>\frac{x}{\log x}
		\end{align}
		for sufficiently large values of $x$. In fact, one can verify numerically using \texttt{MATHEMATICA} that, \eqref{31} holds true for every $x\geq \exp(43)$. Finally, combining \eqref{30} and \eqref{31}, we get from \eqref{10},
		\begin{align}\label{32}
			\mathcal{G}(x)=(\pi(x))^2 +\left(\frac{x}{\log x}\right).\left(-e.\pi\left(\frac{x}{e}\right)\right)<(\pi(x))^2+\pi(x).(-\pi(x))=0.
		\end{align}
		and this is valid unconditionally for every $x\geq \exp(43.5102147)$. Therefore, we have our $x_{0}=\exp(43.5102147)$ as desired in order for the \textit{Ramanujan's Inequality} to hold without any further assumptions.
		\section{Numerical Estimates for $\mathcal{G}(x)$}
		We can indeed verify our claim using programming tools such as \texttt{MATHEMATICA} for example. The numerical data\footnote[1]{Codes are available at: \url{https://github.com/subhamde1/Paper-15.git}} from the Table \eqref{table 1} and the plot \eqref{fig1} representing values of $\log (-\mathcal{G}(x))$ with respect to $\log x$ for $x\in[\exp(43),\exp(3159)]$ clearly establishes that, $\mathcal{G}$ is indeed \textbf{monotone decreasing} on the interval $[\exp(43.5102147),\exp(3159)]$ and also is \textbf{strictly negative}. It only suffices to check until $\exp(3159)$, as the result has been unconditionally proven for $x\geq \exp(3158.442)$ by Axler \cite{6}. 
				
		\begin{figure}[hbt!]
			\centering
			\includegraphics[width=0.9\linewidth]{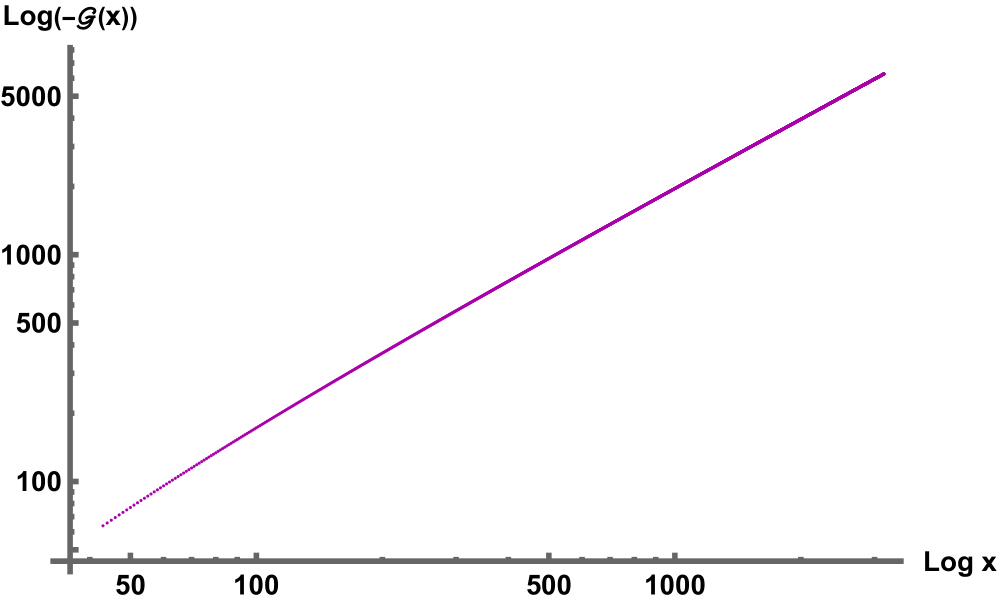}
			\caption{Plot of $\log (-\mathcal{G}(x))$ with respect to $\log x$}
			\label{fig1}
		\end{figure}		
		
		\begin{table}[hbt!]
			\centering
			\begin{tabular}{|c|c|}
				\hline
					\rowcolor{gray}
				\textbf{\( x \)} & \textbf{\( \mathcal{G}(x) \) }\\
				\hline
				$  $ & $  $ \\
			$e^{43.5102147}$ & $-1.2984816 \times 10^{28}$ \\
			$e^{49}$ & $-3.5777143 \times 10^{32}$ \\
			$e^{59}$ & $-5.3863026 \times 10^{40}$ \\
			$e^{159}$ & $-8.6366147 \times 10^{124}$ \\
			$e^{259}$ & $-3.2250049 \times 10^{210}$ \\
			$e^{359}$ & $-3.2357043 \times 10^{296}$ \\
			$e^{459}$ & $-5.3064365 \times 10^{382}$ \\
			$e^{559}$ & $-1.1686993 \times 10^{469}$ \\
			$e^{659}$ & $-3.1339236 \times 10^{555}$ \\
			$e^{759}$ & $-9.6742945 \times 10^{641}$ \\
			$e^{859}$ & $-3.3194561 \times 10^{728}$ \\
			$e^{959}$ & $-1.2367077 \times 10^{815}$ \\
			$e^{1059}$ & $-4.9214899 \times 10^{901}$ \\
			$e^{1159}$ & $-2.0671392 \times 10^{988}$ \\
			$e^{1259}$ & $-9.0822473 \times 10^{1074}$ \\
			$e^{1359}$ & $-4.1454353 \times 10^{1161}$ \\
			$e^{1459}$ & $-1.9549848 \times 10^{1248}$ \\
				\hline
			\end{tabular}
			\quad
			\begin{tabular}{|c|c|}
				\hline
					\rowcolor{gray}
				\textbf{\( x \)} & \textbf{\( \mathcal{G}(x) \) }\\
				\hline
					$  $ & $  $ \\
				$e^{1559}$ & $-9.4847597 \times 10^{1334}$ \\
				$e^{1659}$ & $-4.7172079 \times 10^{1421}$ \\
				$e^{1759}$ & $-2.3980349 \times 10^{1508}$ \\
				$e^{1859}$ & $-1.2430367 \times 10^{1595}$ \\
				$e^{1959}$ & $-6.5566576 \times 10^{1681}$ \\
				$e^{2059}$ & $-3.5131458 \times 10^{1768}$ \\
				$e^{2159}$ & $-1.9093149 \times 10^{1855}$ \\
				$e^{2259}$ & $-1.0511565 \times 10^{1942}$ \\
				$e^{2359}$ & $-5.8557034 \times 10^{2028}$ \\
				$e^{2459}$ & $-3.2975152 \times 10^{2115}$ \\
				$e^{2559}$ & $-1.8754944 \times 10^{2202}$ \\
				$e^{2659}$ & $-1.0765501 \times 10^{2289}$ \\
				$e^{2759}$ & $-6.2322859 \times 10^{2375}$ \\
				$e^{2859}$ & $-3.6365683 \times 10^{2462}$ \\
				$e^{2959}$ & $-2.1376236 \times 10^{2549}$ \\
				$e^{3059}$ & $-1.2651826 \times 10^{2636}$ \\
				$e^{3159}$ & $-7.5364298 \times 10^{2722}$ \\
				\hline
			\end{tabular}
			\caption{Values of \( \mathcal{G}(x) \)}
			\label{table 1}
		\end{table}

\clearpage 
\section{Future Research Prospects}
In summary, we've utilized specific order estimates for the \textit{Prime Counting Function} $\pi(x)$ in addition to several explicit bounds involving \textit{Chebyshev's $\vartheta$-function}, $\vartheta(x)$, a priori with the help of the \textit{Prime Number Theorem} in order to conjure up an improved bound for the famous \textit{Ramanujan's Inequality}. Although, it'll surely be interesting to observe whether it's at all feasible to apply any other techniques for this purpose.\par 
On the other hand, one can surely work on some modifications of \textit{Ramanujan's Inequality} For instance, \textit{Hassani} studied \eqref{1} extensively for different cases \cite{3}, and eventually claimed that, the inequality does in fact reverses if one can replace $e$ by some $\alpha$ satifying, $0<\alpha<e$, although it retains the same sign for every $\alpha \geq e$.\par 
In addition to above, it is very much possible to come up with certain generalizations of Theorem \eqref{thm2}. In this context, we can study \textit{Hassani}'s stellar effort in this area where, he apparently increased the power of $\pi(x)$ from $2$ upto $2^n$ and provided us with this wonderful inequality stating that for sufficiently large values of $x$ \cite{16}, 
\begin{align*}
	(\pi(x))^{2^n}<\frac{e^n}{\prod\limits_{k=1}^{n}\left(1-\frac{k-1}{\log x}\right)^{2^{n-k}}}\left(\frac{x}{\log x}\right)^{2^n -1}\pi\left(\frac{x}{e^n}\right)
\end{align*}
Finally, and most importantly, we can choose to broaden our horizon, and proceed towards studying the \textit{prime counting function} in much more detail in order to establish other results analogous to Theorem \eqref{thm2}, or even study some specific polynomial functions in $\pi(x)$ and also their powers if possible. One such example which can be found in \cite{17} eventually proves that, for sufficiently large values of $x$, 
\begin{align*}
	\frac{3ex}{\log x} \left(\pi\left(\frac{x}{e}\right)\right)^{3^n - 1}<(\pi(x))^{3^n} + \frac{3e^2 x}{(\log x)^2} \left(\pi\left(\frac{x}{e^2}\right)\right)^{3^n - 2} \mbox{ , }\hspace{10pt} n>1
\end{align*} 
Whereas, significantly the inequality reverses for the specific case when, $n=1$ (\textit{Cubic Polynomial Inequality}) (cf. Theorem $(3.1)$ \cite{17}).\par 
Hopefully, further research in this context might lead the future researchers to resolve some of the unsolved mysteries involving \textit{prime numbers}, or even solve some of the unsolved problems surrounding the iconic field of Number Theory.

\vspace{80pt}
\section*{Acknowledgments}
I'll always be grateful to \textbf{Prof. Adrian W. Dudek} ( Adjunct Associate Professor, Department of Mathematics and Physics, University of Queensland, Australia ) for inspiring me to work on this problem and pursue research in this topic. His leading publications in this area helped me immensely in detailed understanding of the essential concepts.

\section*{Statements and Declarations}
\subsection*{Conflicts of Interest Statement}
I as the author of this article declare no conflicts of interest.
\subsection*{Data Availability Statement}
I as the sole author of this article confirm that the data supporting the findings of this study are available within the article [and/or] its supplementary materials.

\bibliographystyle{elsarticle-num}

\begin{thebibliography}{00}
\bibitem{1}  Ramanujan Aiyangar, Srinivasa, Berndt, Bruce C.,
			\emph{ Ramanujan’s Notebooks: Part IV},
			New York: Springer-Verlag,
			1994.
			\bibitem{2}  Dudek, Adrian W., Platt, David J.,
			\emph{ On Solving a Curious Inequality of
				Ramanujan},
			Experimental Mathematics, 24:3, 289-294, 2015.
			DOI: 10.1080/10586458.2014.990118.
			\bibitem{3}  Hassani, Mehdi,
			\emph{ On an Inequality of Ramanujan Concerning
				the Prime Counting Function},
			Ramanujan Journal 28
			(2012), 435–442.
			\bibitem{4}  	Fiori, 	A., Kadiri, H., Swidinsky, J.,
			\emph{ Sharper bounds for the error term in the Prime Number Theorem},
			Research in Number Theory 9, no. 3 (2023): 63.
			\bibitem{5}   Mossinghoff, M. J., Trudgian, T. S.,
			\emph{ Nonnegative Trigonometric Polynomials and a Zero-Free
				Region for the Riemann Zeta-Function},
			arXiv: 1410.3926,
			2014.
			\bibitem{6}   Axler, Christian,
			\emph{ On Ramanujan's prime counting inequality},
			arXiv preprint arXiv:2207.02486 (2022).
			\bibitem{7}   Rosser, J. B., Schoenfeld, L.,
			\emph{ Approximate formulas for some functions of prime numbers},
			Illinois J. Math. 6 (1962), 64–94.
			\bibitem{8}   Schoenfeld, L.,
			\emph{ Sharper bounds for the Chebyshev functions $\theta(x)$ and $\psi(x)$}, II, 
			Math. Comp. 134 (1976), 337–360.
			\bibitem{9}   Panaitopol, Laurenţiu,
			\emph{ "Inequalities concerning the function $\pi(x)$: applications."}, 
			Acta Arithmetica 94, no. 4 (2000): 373-381.
			\bibitem{10}   Karanikolov, C.,
			\emph{ On some properties of the function $\pi(x)$}, 
			Univ. Beograd Publ.
			Elektrotehn. Fak. Ser. Mat. 1971, 357–380.
			\bibitem{11}   Udrescu, V.,
			\emph{ Some remarks concerning the conjecture $\pi(x+y)<\pi(x)+\pi(y)$}, 
			Rev.
			Roumaine Math. Pures Appl. 20 (1975), 1201–1208.
				\bibitem{12} 
			Ramanujan, S, \emph{“Collected Papers”}, Chelsea, New York, 1962.
			\bibitem{13} 
			Hardy, G. H., \emph{A formula of Ramanujan in the theory of primes}, 1. London Math. Soc. 12
			(1937), 94-98.
				\bibitem{14} 
			Hardy, G. H., \emph{Collected Papers}, vol. II, Clarendon Press, Oxford, 1967.
				\bibitem{15}  De, Subham,
			\emph{ "On proving an Inequality of Ramanujan using Explicit Order Estimates for the Mertens Function"},
			arXiv preprint arXiv:2407.12052 (2024) DOI: https://doi.org/10.48550/arXiv.2407.12052.
				\bibitem{16} 
			Hassani, Mehdi, \emph{Generalizations of an inequality of Ramanujan concerning prime counting
				function}, Appl. Math. E-Notes 13 (2013) 148-154.
			\bibitem{17} 
			De, Subham, \emph{Inequalities involving Higher Degree Polynomial Functions in $\pi(x)$}, arXiv preprint arXiv:2407.18983 (2024) DOI: https://doi.org/10.48550/arXiv.2407.18983.
				\bibitem{18} 
			 Platt, D. J., Trudgian, T. S., \emph{The error term in the prime number theorem}, Math. Comp. 90
			 (2021), no. 328, 871-–881.
			\bibitem{19} 
			Cully-Hugill, M., Johnston, D. R., \emph{On the error term in the explicit formula of Riemann–von
				Mangoldt}, preprint, 2021. Available at arxiv.org/abs/2111.10001.
				\bibitem{20} 
			Johnston, D. R., Yang, A., \emph{Some explicit estimates for the error term in the prime number theorem}, preprint, 2022. Available at arxiv.org/abs/2204.01980.
		

		\end{thebibliography}

\end{document}